\begin{document}
\newtheorem{problem}{Problem}
\newtheorem{theorem}{Theorem}
\newtheorem{lemma}[theorem]{Lemma}
\newtheorem{crit}[theorem]{Criterion}
\newtheorem{claim}[theorem]{Claim}
\newtheorem{cor}[theorem]{Corollary}
\newtheorem{prop}[theorem]{Proposition}
\newtheorem{definition}{Definition}
\newtheorem{question}[theorem]{Question}
\newtheorem{Note}[theorem]{Notation}

\theoremstyle{remark} 
\newtheorem*{note}{Note}
\newtheorem*{rem}{Remark}

\def\cA{{\mathcal A}}
\def\cB{{\mathcal B}}
\def\cC{{\mathcal C}}
\def\cD{{\mathcal D}}
\def\cE{{\mathcal E}}
\def\cF{{\mathcal F}}
\def\cG{{\mathcal G}}
\def\cH{{\mathcal H}}
\def\cI{{\mathcal I}}
\def\cJ{{\mathcal J}}
\def\cK{{\mathcal K}}
\def\cL{{\mathcal L}}
\def\cM{{\mathcal M}}
\def\cN{{\mathcal N}}
\def\cO{{\mathcal O}}
\def\cP{{\mathcal P}}
\def\cQ{{\mathcal Q}}
\def\cR{{\mathcal R}}
\def\cS{{\mathcal S}}
\def\cT{{\mathcal T}}
\def\cU{{\mathcal U}}
\def\cV{{\mathcal V}}
\def\cW{{\mathcal W}}
\def\cX{{\mathcal X}}
\def\cY{{\mathcal Y}}
\def\cZ{{\mathcal Z}}

\def\A{{\mathbb A}}
\def\B{{\mathbb B}}
\def\C{{\mathbb C}}
\def\D{{\mathbb D}}
\def\E{{\mathbb E}}
\def\F{{\mathbb F}}
\def\G{{\mathbb G}}
\def\I{{\mathbb I}}
\def\J{{\mathbb J}}
\def\K{{\mathbb K}}
\def\L{{\mathbb L}}
\def\M{{\mathbb M}}
\def\N{{\mathbb N}}
\def\O{{\mathbb O}}
\def\P{{\mathbb P}}
\def\Q{{\mathbb Q}}
\def\R{{\mathbb R}}
\def\S{{\mathbb S}}
\def\T{{\mathbb T}}
\def\U{{\mathbb U}}
\def\V{{\mathbb V}}
\def\W{{\mathbb W}}
\def\X{{\mathbb X}}
\def\Y{{\mathbb Y}}
\def\Z{{\mathbb Z}}

\def\ep{{\mathbf{e}}_p}
\def\eq{{\mathbf{e}}_q}
\def\cal#1{\mathcal{#1}}

\def\scr{\scriptstyle}
\def\\{\cr}
\def\({\left(}
\def\){\right)}
\def\[{\left[}
\def\]{\right]}
\def\<{\langle}
\def\>{\rangle}
\def\fl#1{\left\lfloor#1\right\rfloor}
\def\rf#1{\left\lceil#1\right\rceil}
\def\le{\leqslant}
\def\ge{\geqslant}
\def\eps{\varepsilon}
\def\mand{\qquad\mbox{and}\qquad}

\def\sssum{\mathop{\sum\ \sum\ \sum}}
\def\ssum{\mathop{\sum\, \sum}}
\def\ssumw{\mathop{\sum\qquad \sum}}

\def\vec#1{\mathbf{#1}}
\def\inv#1{\overline{#1}}
\def\num#1{\mathrm{num}(#1)}
\def\dist{\mathrm{dist}}

\def\fA{{\mathfrak A}}
\def\fB{{\mathfrak B}}
\def\fC{{\mathfrak C}}
\def\fU{{\mathfrak U}}
\def\fV{{\mathfrak V}}

\newcommand{\bflambda}{{\boldsymbol{\lambda}}}
\newcommand{\bfxi}{{\boldsymbol{\xi}}}
\newcommand{\bfrho}{{\boldsymbol{\rho}}}
\newcommand{\bfnu}{{\boldsymbol{\nu}}}

\def\GL{\mathrm{GL}}
\def\SL{\mathrm{SL}}

\def\Hba{\overline{\cH}_{a,m}}
\def\Hta{\widetilde{\cH}_{a,m}}
\def\Hb1{\overline{\cH}_{m}}
\def\Ht1{\widetilde{\cH}_{m}}

\def\flp#1{{\left\langle#1\right\rangle}_p}
\def\flm#1{{\left\langle#1\right\rangle}_m}

\def\Zm{\Z/m\Z}

\def\Err{{\mathbf{E}}}
\def\O{\mathcal{O}}

\def\cc#1{\textcolor{red}{#1}}
\newcommand{\commT}[2][]{\todo[#1,color=green!60]{Tim: #2}}
\newcommand{\commB}[2][]{\todo[#1,color=red!60]{Bryce: #2}}

\DeclarePairedDelimiter\ceil{\lceil}{\rceil}
\DeclarePairedDelimiter\floor{\lfloor}{\rfloor}

\newcommand{\comm}[1]{\marginpar{%
\vskip-\baselineskip 
\raggedright\footnotesize
\itshape\hrule\smallskip#1\par\smallskip\hrule}}
\newcolumntype{L}{>{\raggedright\arraybackslash}X}

\def\xxx{\vskip5pt\hrule\vskip5pt}

\def\dmod#1{\,\left(\textnormal{mod }{#1}\right)}


\title[Bounds on the Minkowski constants and a function involving $\varphi$]{Bounds on the Minkowski constants and a function involving $\varphi$}

\author[G. Pelizzari]{Giulia Pelizzari}\thanks{}
\address{School of Science, The University of New South Wales Canberra, Australia}
\email{g.pelizzari@unsw.edu.au}

\author[J. Punch]{James Punch}\thanks{}
\address{School of Science, The University of New South Wales Canberra, Australia}
\email{j.punch@unsw.edu.au}

\pagenumbering{arabic}

\begin{abstract}
In 1887, Minkowski determined the least common multiple of the orders of all finite subgroups of $GL_n(\Q);$ we refer to this number as $M(n)$. In \cite{Katznelson}, Katznelson provides the asymptotic behaviour of $M(n)$, with a small error term. In this paper, we use elementary techniques to find explicit upper and lower bounds on $M(n)$ that improve on Katznelson's results; we also recover his asymptotic result. Our results immediately imply explicit bounds on functions closely related to $M(n)$, which appear in the study of abelian varieties (see, for example, \cite{Silverberg1992}, \cite{MR3720508} and \cite{Ozeki2024}). Finally, we examine the function $\Phi(n)$, which also appears in \cite{Ozeki2024}, defined as the greatest positive integer $m$ for which $\varphi(m)$ divides $2n$. We provide explicit upper bounds on $\Phi$.
\end{abstract}

\maketitle
\let\thefootnote\relax
\footnote{\textit{Affiliation}: School of Science, University of New South Wales, Canberra.}
\footnote{\textit{Corresponding author}: Giulia Pelizzari (g.pelizzari@unsw.edu.au).}

\section{Introduction}

In \cite{Minkowski1887}, Minkowski proves that the least common multiple of the orders of all finite subgroups of $GL_n(\Q)$ is $$M(n):=\prod_q q^{\left\lfloor\frac{n}{q-1}\right\rfloor+\left\lfloor\frac{n}{q(q-1)}\right\rfloor+\left\lfloor\frac{n}{q^2(q-1)}\right\rfloor+\ldots}$$ where the product is over all primes $q\leq n+1$.

In \cite[Theorem 1]{Katznelson}, Katznelson provides an asymptotic bound for $M(n)$:
\begin{prop}\label{prop:Katznelson} For any $\varepsilon>0,$
    $$\log{\left(\frac{M(n)}{n!}\right)}=Kn+O(n^{1/2+\varepsilon}) \text{ as }n\rightarrow\infty,$$ where $$K:=\sum_{q \text{ prime}}\frac{\log{q}}{(q-1)^2} \;\; \text{ and }\;\; 1.22696< K < 1.22697.$$ The sum defining $K$ is taken over all primes.
\end{prop}
We therefore define the error term $E(n)=\log{M(n)}-\log{n!}-Kn$. In Section \ref{sec:2}, we use elementary techniques to give explicit upper and lower bounds for $E(n)$. Corresponding bounds on $M(n)$ will follow immediately.

Several other authors use functions that are closely related to $M(n)$ to study abelian varieties. Explicit bounds on these functions, which we now describe, will follow directly from our results.

In \cite{Silverberg1992}, Silverberg uses Minkowski's method from \cite{Minkowski1887} to find an upper bound for the dimension of a certain extension field $L$. She shows that, given two abelian varieties $A$ and $B$ over the same field $F$, all isogenies from one to the other are defined over a certain extension field $L$. If we define
$$G(n):=\gcd\{\lvert\GL_n(\Z /N\Z)\rvert: N\geq3\} \text{ and } H(n):=\gcd\{\lvert\text{GSp}(n, \Z /N\Z)\rvert: N\geq3\},$$ then we have $[L:F]\mid H(\dim{A})H(\dim{B})$ by \cite[Theorem 4.2]{Silverberg1992}. If $A$ and $B$ are chosen to be the same variety, then $[L:F]\mid H(\dim{A})$ by \cite[Theorem 4.1]{Silverberg1992}.
In \cite[3.1]{Silverberg1992}, Silverberg calculates $G(n)$ and $H(n)$ exactly; in our notation, her results are:
$$G(n)=2^{\lfloor{n/2}\rfloor}M(n)\text{ and } H(n)=\frac{G(2n)}{2^{n-1}}=2M(2n).$$

In \cite{MR3720508}, Guralnick and Kedlaya provide a function, $\tilde{H}(n)$, that can be substituted for $H(n)$ in Silverberg's divisibility bounds in the case where the varieties are the same; we have $\tilde{H}(n)\leq H(n)$. To define $\tilde{H}$, recall that a Fermat prime is a prime of the form $2^{2^k}+1$ for some non-negative integer $k$, and let $r(n,q):=\sum_{i=0}^\infty\left\lfloor\frac{2n}{(q-1)q^i}\right\rfloor$. Then \begin{equation*}
    \tilde{H}(n):= \prod_{q \text{ prime}}q^{r'(n,q)},\text{ where } r'(n,q):= \begin{cases}
        r(n,q)-n-1 & \text{if }q=2,\\
        \max{\{0,r(n,q)-1\}} & \text{if }q\text{ is a Fermat prime},\\
        r(n,q) & \text{otherwise.}
    \end{cases}
\end{equation*}

If an upper bound on $[L:F]$ is all that is needed, one should use bounds due to R\'emond \cite{Remond}. In the case where the varieties are the same and have dimension $s$, he shows that $[L:F]\leq 2\alpha(s)6^{s-1}s!$, where $\alpha(2)=2,\alpha(4)=5,\alpha(6)=7/6$, and $\alpha(s)=1$ otherwise.
This is best possible; for any $s$, the upper bound is achieved for some choice of variety. Additionally, the asymptotic behaviour of this bound is immediately clear. He provides a similar result for the more general case where the varieties are not the same.

In \cite{Ozeki2024}, Ozeki studies the dimension of $p$-primary torsion subgroups of Mordell--Weil groups of CM abelian varieties over certain infinite extensions of $p$-adic fields, obtained by adjoining a $p$-adic field and the Lubin--Tate extension of another $p$-adic field. To do this, he makes use of $H(n)$, and the function $\Phi(n)$, which is defined in the following way: $$\Phi(n) := \max\{m \in \mathbb{Z}_{>0} \;\mid \;\varphi(m) \text{ divides } 2n\}.$$ 
We examine $\Phi(n)$ in Section \ref{sec:3}. Ozeki proves that $\Phi(n)\leq 6n$ for odd $n$; we show that the constant $6$ cannot be improved. Finally, we give explicit upper bounds on $\Phi(n)$ that are valid for even $n$.

Our results on $G, H, \Phi$ and $M$ improve \cite[Theorem 3.2 and Corollary 3.3]{Silverberg1992} and can be applied to \cite[Proposition 4.3]{Silverberg1992} to get an explicit bound. Further applications include estimating the value of $C$ in \cite[Theorems 1.1 and 3.1]{Ozeki2024}, and finding an explicit bound in \cite[Theorem 1.2]{Philip}.

\section{Explicit bounds on \texorpdfstring{$M(n)$ and other related functions}{M(n) and other related functions}}\label{sec:2}
The next theorem gives explicit upper and lower bounds on $E(n)$, which we defined above.

\begin{theorem} \label{thm:explicitG}
    Let $E(n)$ be as above, and assume that $n\geq 2$.
    Then \begin{equation}\label{eq:logGresult}\begin{split}
        &-\frac{1}{2}\sqrt{n}\log{n}-4\sqrt{n}-\frac{7\sqrt{n}}{\log{n}}-\frac{7}{4}\log{n}-K+2\log{2}-\frac{13}{4}-\frac{1}{\log{n}}-\frac{\log{n}}{4\sqrt{n}}-\frac{1}{4\sqrt{n}}
        \\&\leq  E(n)\leq \\&
        \frac{1}{2}\sqrt{n}\log{n}+4\sqrt{n}+\frac{7\sqrt{n}}{\log{n}}+2.8749719-\log{2}+\frac{1}{\log{n}}.
    \end{split}\end{equation}
\end{theorem}

 \begin{rem}
 This result implies Proposition \ref{prop:Katznelson}. For contrast, Katznelson proves that $E(n)<2\sqrt{n}\log{n}+3\sqrt{n}$. He also gives a lower bound, but it involves a sum over primes up to $\sqrt{n}$, and is therefore not easily calculated for large $n$. Figure \ref{fig:logE} shows $E(n)$, our upper and lower bounds, and Katznelson's upper bound, for $2\leq n \leq 1500$.
 \end{rem}

 \begin{figure}
    \centering
\includegraphics[width=1\linewidth]{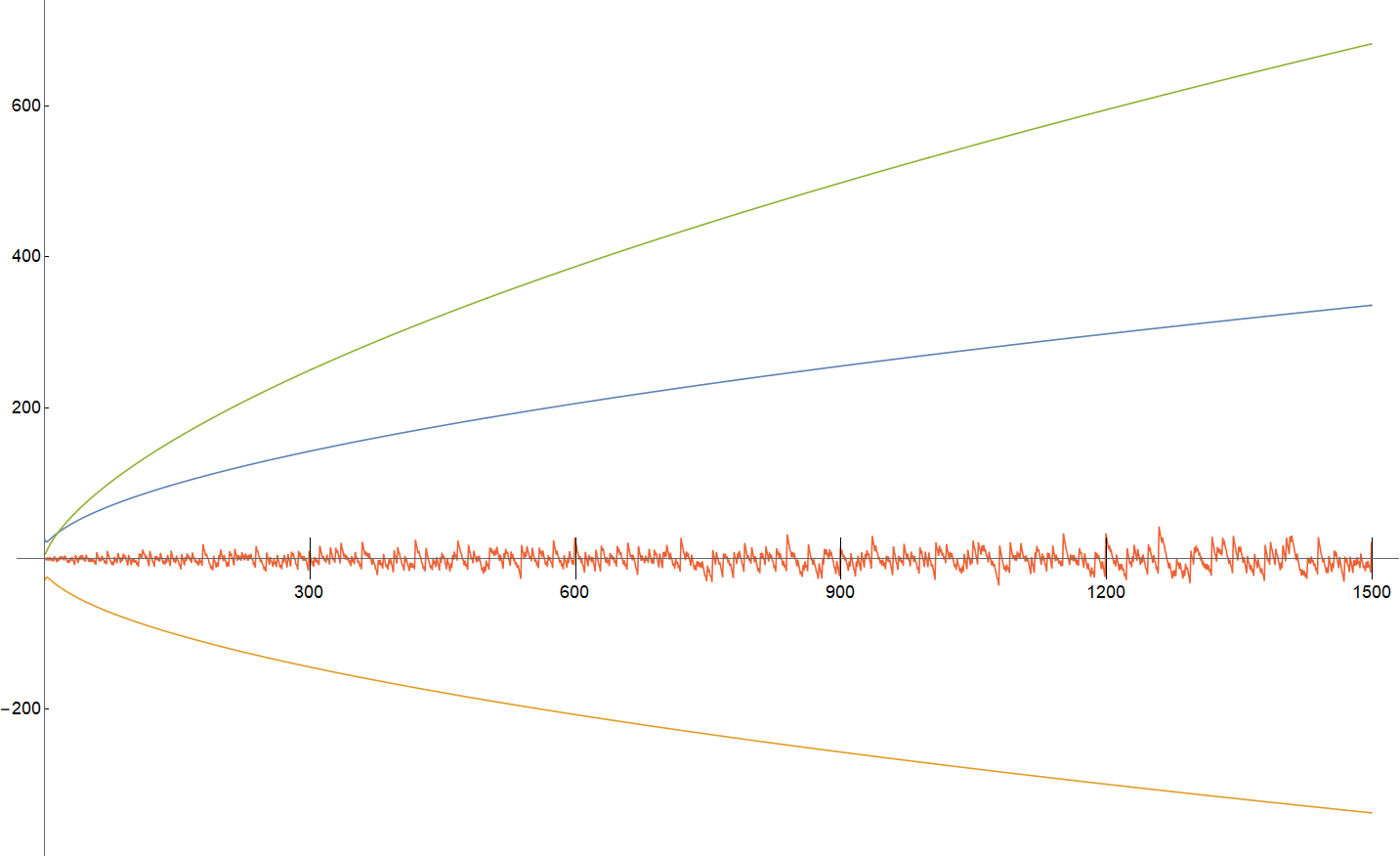}
    \caption{The function $E(n)$ (in red) from $2\leq n \leq 1500$, with upper and lower bounds (in blue and orange respectively) as in Theorem \ref{thm:explicitG}. Katznelson's upper bound is shown in green.}
    \label{fig:logE}
\end{figure}
\begin{proof}
We prove the bounds by assuming that $n\geq 1000$, then verify by computation that they still hold if $2\leq n\leq 999$. We have
    \begin{equation}\label{eq:exactlogG}\begin{split}
        \log M(n) &=\sum_{j=0}^{\infty}\floor*{\frac{n}{2^j}}\log{2}+\sum_{\substack{2<q\leq n \\ q\text{ prime}}}\left(\log{q}\sum_{j=0}^{\infty}\floor*{\frac{n}{q^j(q-1)}}\right)+'\log{(n+1)},\end{split}
    \end{equation} where the $+'$ indicates that we only add the last term if $n+1$ is prime. This is
    \begin{equation*}
    \begin{split}
        \log M(n)&=n\log{2}+\sum_{\substack{2\leq q\leq n \\ q\text{ prime}}}\left(\sum_{j=0}^{\infty}\floor*{\frac{n}{q^{j+1}}}\log{q}\right)\\&+\sum_{\substack{2<q\leq n \\ q\text{ prime}}}\left(\log{q}\sum_{j=0}^{\infty}\left(\floor*{\frac{n}{q^j(q-1)}}-\floor*{\frac{n}{q^{j+1}}}\right)\right)+'\log{(n+1)}.
        \end{split}
    \end{equation*}
    
    Using Legendre's formula for $n!$ (see \cite[Theorem 3.14]{Apostol}), we find that $\log M(n)$ is \begin{equation*}
    \begin{split}
    &n\log{2}+\log{n!}+\sum_{\substack{2<q\leq n \\ q\text{ prime}}}\left(\log{q}\sum_{j=0}^{\infty}\left(\floor*{\frac{n}{q^j(q-1)}}-\floor*{\frac{n}{q^{j+1}}}\right)\right)+'\log{(n+1)}\\
    &=n\log{2}+f_1(n)+\log{n!}+\sum_{\substack{2<q\leq n \\ q\text{ prime}}}\left(\log{q}\sum_{j=0}^{c(n,q)}\left(\floor*{\frac{n}{q^j(q-1)}}-\floor*{\frac{n}{q^{j+1}}}\right)\right),\end{split}\end{equation*}
 where $0\leq f_1(n)\leq \log{(n+1)}$ and $c(n,q)=\floor{\log_q{n}}$. Truncating the sum after $c(n,q)$ is allowed because both floor functions in the sum yield 0 if $q^j>n$.
    


This means that \begin{equation*} \begin{split}
        E(n)&=n\log 2 -Kn + f_1(n)+\\&\sum_{\substack{2<q\leq n \\ q\text{ prime}}}\left(\log{q}\sum_{j=0}^{c(n,q)}\left(\floor*{\frac{n}{q^j(q-1)}}-\floor*{\frac{n}{q^{j+1}}}\right)\right). \end{split}
    \end{equation*}
    
We now examine the sum over primes on the right hand side. Let $\{x\}$ be the fractional part of $x$. \begin{equation*}\begin{split}
        &\sum_{\substack{2<q\leq n \\ q\text{ prime}}}\left(\log{q}\sum_{j=0}^{c(n,q)}\left(\frac{n}{q^j(q-1)}-\left\{{\frac{n}{q^j(q-1)}}\right\}-\frac{n}{q^{j+1}}+\left\{{\frac{n}{q^{j+1}}}\right\}\right)\right)\\
        &=\sum_{\substack{2<q\leq n \\ q\text{ prime}}}\left(\log{q}\sum_{j=0}^{c(n,q)}\left(\frac{n}{q^{j+1}(q-1)}-\left\{{\frac{n}{q^j(q-1)}}\right\}+\left\{{\frac{n}{q^{j+1}}}\right\}\right)\right)=:S_1-S_2, \end{split}
    \end{equation*} where $$S_1=\sum_{\substack{2<q\leq n \\ q\text{ prime}}}\left(\frac{n\log{q}}{q(q-1)}\sum_{j=0}^{c(n,q)}\frac{1}{q^{j}}\right)$$ and $$S_2=\sum_{\substack{2<q\leq n \\ q\text{ prime}}}\left(\log{q}\sum_{j=0}^{c(n,q)}\left(\left\{{\frac{n}{q^j(q-1)}}\right\}-\left\{{\frac{n}{q^{j+1}}}\right\}\right)\right).$$

In $S_1$, since $\log_q{n}\leq c(n,q) \leq 1+\log_q{n}$, the geometric series $S=\sum_{j=0}^{c(n,q)}\frac{1}{q^j}$ satisfies \begin{align*}
        \frac{1-1/n}{1-1/q}=\frac{1-(1/q)^{\log_q n}}{1-1/q}&\leq S \leq \frac{1-(1/q)^{1+\log_q n}}{1-1/q}=\frac{1-1/nq}{1-1/q}
    \end{align*} Therefore 
    \begin{equation}\label{eq:S1}
        (n-1) \sum_{\substack{2<q\leq n \\ q\text{ prime}}}\frac{\log{q}}{(q-1)^2} \leq S_1 \leq n \sum_{\substack{2<q\leq n \\ q\text{ prime}}}\frac{\log{q}}{(q-1)^2} - \sum_{\substack{2<q\leq n \\ q\text{ prime}}}\frac{\log{q}}{q(q-1)^2}.
    \end{equation}
    
    The sum defining $K$ converges absolutely, so we may write the left hand side as
   $$ (n-1)(K-\log{2})  - (n-1) \sum_{\substack{q> n \\ q\text{ prime}}}\frac{\log{q}}{(q-1)^2},$$
 and the sum in the last expression is bounded by
\begin{equation*}
    \sum_{\substack{q> n \\ q\text{ prime}}}\frac{\log{q}}{(q-1)^2} \leq \int_{n}^{\infty} \frac{\log{t}}{(t-1)^2} dt = \left[ - \frac{\log{t}}{t-1}+\log{\frac{t-1}{t}}\right]_{n}^{\infty} = \frac{\log{n}}{n-1}-\log{\left(\frac{n-1}{n}\right)}.
\end{equation*}

This upper bound could be improved, but error terms in other parts of this proof are much larger, so it would be of little consequence. The right hand side of (\ref{eq:S1}) is:
\begin{equation*}
    n(K-\log{2}) - n \sum_{\substack{q> n \\ q\text{ prime}}}\frac{\log{q}}{(q-1)^2} - \left(\sum_{\substack{2<q\leq 1000 \\ q\text{ prime}}}\frac{\log{q}}{q(q-1)^2} +\sum_{\substack{1000<q\leq n \\ q\text{ prime}}}\frac{\log{q}}{q(q-1)^2}\right).
\end{equation*}

We also have \begin{equation*}\begin{split}
    \sum_{\substack{q> n \\ q\text{ prime}}}\frac{\log{q}}{(q-1)^2} &\geq \int_{n+1}^{\infty} \frac{\log{t}}{(t-1)^2}dt = \frac{\log{(n+1)}}{n} - \log{\left(\frac{n}{n+1}\right)}, \\
    \sum_{\substack{2<q\leq 1000 \\ q\text{ prime}}}\frac{\log{q}}{q(q-1)^2} &\geq 0.1250281, \text{ and } \sum_{\substack{1000<q\leq n \\ q\text{ prime}}}\frac{\log{q}}{q(q-1)^2} \geq 0.\end{split}
\end{equation*} Since $- n\log{\left(\frac{n}{n+1}\right)} < 1$ and $(n-1)\log{\left(\frac{n-1}{n}\right)} > -1$, we have
\begin{equation*}
     (K-\log{2})(n-1) - (\log{n}) -1 < S_1 < (K-\log{2})n - \log{(n+1)} +1 - 0.1250281.
\end{equation*}

We now split $S_2$ into two parts: $S_3$, taken over the interval $(2,n^{1/2}+1]$; and $S_4$, taken over $(n^{1/2}+1,n]$. In $S_3$, we bound the fractional parts trivially. Writing $\pi$ for the prime-counting function and $\theta$ for the Chebyshev theta function, \begin{align*}\begin{split}
        \lvert S_3 \rvert &\leq\sum_{\substack{2<q\leq n^{1/2}+1 \\ q\text{ prime}}}\left(\log{q}\sum_{j=0}^{c(n,q)}\left\lvert\left\{{\frac{n}{q^j(q-1)}}\right\}-\left\{{\frac{n}{q^{j+1}}}\right\}\right\rvert\right) \\
        &\leq\sum_{\substack{2<q\leq n^{1/2}+1 \\ q\text{ prime}}}\left(\log{q}\sum_{j=0}^{c(n,q)}1\right)\leq\sum_{\substack{2<q\leq n^{1/2}+1 \\ q\text{ prime}}}(\log{q})(1+\log_q n)\\
        &=\sum_{\substack{2<q\leq n^{1/2}+1 \\ q\text{ prime}}}(\log q+\log n)\\
        &=\theta(n^{1/2}+1)-\log{2}+\log{n}\sum_{\substack{2<q\leq n^{1/2}+1 \\ q\text{ prime}}}1\\
        &\leq \theta(n^{1/2}+1)-\log{2}+(\log{n})\pi(\sqrt{n}).\\
        \end{split}
    \end{align*} By \cite[Theorems 1 and 4]{RosserSchoenfeld1962}, $\pi(n)<\frac{n}{\log{n}}\left(1+\frac{3}{2\log{n}}\right)$ and $\theta(n)<n\left(1+\frac{1}{2\log{n}}\right)$ whenever $n>1$. Since $n\geq 1000$, it follows that \begin{equation*}\begin{split}
        \lvert S_3 \rvert &\leq (n^{1/2}+1)\left(1+\frac{1}{2\log{(n^{1/2}+1)}}\right)-\log{2}+(\log{n})\frac{\sqrt{n}}{\frac{1}{2}\log{n}}\left(1+\frac{3}{\log{n}}\right)\\
        &\leq (n^{1/2}+1)\left(1+\frac{1}{2\log{n^{1/2}}}\right)-\log{2}+2\sqrt{n}\left(1+\frac{3}{\log{n}}\right)\\
        &= 3\sqrt{n}+\frac{7\sqrt{n}}{\log{n}}+1-\log{2}+\frac{1}{\log{n}}.\end{split}
    \end{equation*} Better bounds for $\pi$ and $\theta$ are available, but would not improve the main term.
    
    In $S_4$, observe that if $n\geq q>n^{1/2}+1$, then $\log_q n \geq 1> \frac{1}{2}\log_q n$. That is, $1\leq \log_q n < 2$, so $c(q,n)=1$. Both $\frac{n}{q(q-1)}$ and $\frac{n}{q^2}$ are in the interval $(0,1)$, so each is equal to its fractional part. We therefore have \begin{equation*}\begin{split}
        S_4&=\sum_{\substack{n^{1/2}+1<q\leq n \\ q\text{ prime}}}\left((\log{q})\sum_{j=0}^{1}\left(\left\{{\frac{n}{q^j(q-1)}}\right\}-\left\{{\frac{n}{q^{j+1}}}\right\}\right)\right)\\
        &=\sum_{\substack{n^{1/2}+1<q\leq n \\ q\text{ prime}}}\left(\left\{{\frac{n}{q-1}}\right\}-\left\{{\frac{n}{q}}\right\}+\frac{n}{q(q-1)}-\frac{n}{q^2}\right)\log{q}\\
        &=\sum_{\substack{n^{1/2}+1<q\leq n \\ q\text{ prime}}}\left(\left\{{\frac{n}{q-1}}\right\}-\left\{{\frac{n}{q}}\right\}\right)\log{q}+\sum_{\substack{n^{1/2}+1<q\leq n \\ q\text{ prime}}}\left(\frac{n}{q^2(q-1)}\right)\log{q}=:S_5+S_6.
        \end{split}
    \end{equation*}

    Clearly, $S_6\geq 0$. For an upper bound, observe that if $q>1+\sqrt{n}$, we have $\frac{q}{q-1}=1+\frac{1}{q-1}< 1+\frac{1}{\sqrt{n}}$. Then \begin{equation*}\begin{split}
        S_6&< \sum_{\substack{n^{1/2}+1<q\leq n \\q\text{ prime}}}\frac{n\log{q}}{q^3}\left(1+\frac{1}{\sqrt{n}}\right)\leq (n+\sqrt{n})\int_{n^{1/2}}^{n}\frac{\log{t}}{t^3}\,dt.\end{split}
    \end{equation*}
    
The comparison with the integral holds because $\frac{\log{q}}{q^3}$ is a decreasing function whenever $q>e^{1/3}\approx 1.4$, and we know $q>1+\sqrt{1000}>32$. So, we have $$S_6<(n+\sqrt{n})\left[-\frac{1}{4t^2}-\frac{\log{t}}{2t^2}\right]_{\sqrt{n}}^{n}< \frac{1}{4}\left(1+\frac{1}{\sqrt{n}}\right)(1+\log{n}).$$

Now, \begin{equation*}\begin{split}S_5=\sum_{\substack{n^{1/2}+1<q\leq n \\ q\text{ prime}}}\left(\frac{n}{q-1}-\frac{n}{q}-\left(\floor*{\frac{n}{q-1}}-\floor*{\frac{n}{q}}\right)\right)\log{q},\end{split}\end{equation*} and we have $$\frac{n}{q-1}-\frac{n}{q}=\frac{n}{q(q-1)}\in(0,1)\text{ for any }q\in(n^{1/2}+1,n],$$ so $\floor*{\frac{n}{q-1}}-\floor*{\frac{n}{q}}$ is either zero or one.
    
    Therefore \begin{equation*}\begin{split}
        S_5&=\sum_{\substack{n^{1/2}+1<q\leq n \\ q\text{ prime}}}\left({\frac{n}{q-1}}-\frac{n}{q}\right)\log{q}-\sum_{\substack{n^{1/2}+1<q\leq n \\ q\text{ prime}\\\floor*{\frac{n}{q-1}}-\floor*{\frac{n}{q}}=1}} \log{q}=:S_7-S_8.    \end{split}
    \end{equation*}

For $S_7$, we use the same technique as in $S_6$: $S_7$ is clearly non-negative, and \begin{equation*}\begin{split}
    S_7&=\sum_{\substack{n^{1/2}+1<q\leq n \\ q\text{ prime}}}\frac{n\log{q}}{q^2}\frac{q}{q-1}
    \leq (n+\sqrt{n})\int_{n^{1/2}}^{n}\frac{\log{t}}{t^2}\,dt\\
    &=(n+\sqrt{n})\left[-\frac{1}{t}-\frac{\log{t}}{t}\right]_{\sqrt{n}}^{n}< (n+\sqrt{n})\left(\frac{1}{\sqrt{n}}+\frac{\frac{1}{2}\log{n}}{\sqrt{n}}\right)\\
    &=\frac{1}{2}(1+\sqrt{n})(2+\log{n}).\end{split}
    \end{equation*}

Clearly, $S_8\geq 0$. The condition $\floor*{\frac{n}{q-1}}-\floor*{\frac{n}{q}}=1$ holds if and only if some positive integer $z$ satisfies $q>\frac{n}{z}\geq q-1$. Since $1\leq \frac{n}{q} < n^{1/2}$, we have $z<n^{1/2}$. Therefore \begin{equation*}\begin{split}
        S_8&<\sum_{1\leq z< n^{1/2}} \log{\left(\frac{n}{z}+1\right)}\\&\leq \int_0^{n^{1/2}}\log{\left(\frac{n}{t}+1\right)}\,dt=(\sqrt{n}+n)\log{(1+\sqrt{n})}-\frac{1}{2}n\log{n}\\
        &\leq(\sqrt{n}+n)\left(\log{\sqrt{n}+\frac{1}{\sqrt{n}}}\right)-\frac{1}{2}n\log{n}=\frac{1}{2}\sqrt{n}\log{n}+\sqrt{n}+1.    \end{split}
    \end{equation*}
    
    We have ${E(n)}=-(K-\log{2})n+f_1(n)+S_1-S_3-S_6-S_7+S_8$. Combining the upper and lower bounds on each of these quantities yields (\ref{eq:logGresult}). Finally, computing $E(n)$ for $2\leq n \leq 999$ establishes the correctness of \eqref{eq:logGresult} for these $n$.
    \end{proof}

    Explicit bounds for $M(n)$ follow immediately by observing that $\log{M(n)}=E(n)+Kn+\log{n!}$. 
    Robbins' explicit result for Stirling's formula (see \cite[1]{Robbins1955}) is:
 $$n!=\sqrt{2\pi}n^{n+\frac{1}{2}}e^{r_n-n},\text{ where }\frac{1}{12n+1}<r_n<\frac{1}{12n}.$$
 Taking logarithms, we find that $\log(n!)=n\log n -n + \log{\sqrt{2\pi}}+\frac{1}{2}\log{n}+r_n$.
 
 \begin{cor} Let $n\geq 2$. We have \begin{equation}\label{eq:logMresult}\begin{split}
        &-\frac{1}{2}\sqrt{n}\log{n}-4\sqrt{n}-\frac{7\sqrt{n}}{\log{n}}-\frac{5}{4}\log{n}-K+2\log{2}\\&+\log{\sqrt{2\pi}}-\frac{13}{4}-\frac{1}{\log{n}}-\frac{\log{n}}{4\sqrt{n}}-\frac{1}{4\sqrt{n}}+\frac{1}{12n+1}\\&
        \leq  \log{M(n)} - [n\log{n}+ n(K-1)]\leq 
        \\&
        \frac{1}{2}\sqrt{n}\log{n}+4\sqrt{n}+\frac{7\sqrt{n}}{\log{n}}+\frac{1}{2}\log{n}+2.8749719\\&-\log{2}+\log{\sqrt{2\pi}}+\frac{1}{\log{n}}+\frac{1}{12n}.
    \end{split}\end{equation}
 \end{cor}

 An explicit bound for $\log{G(n)}$, immediately follows from the fact that $\log{G(n)}=\lfloor{n/2}\rfloor \log{2}+\log{M(n)}$ and the estimate $(n/2 -1/2)\log{2}\leq\lfloor{n/2}\rfloor\log{2}\leq(n/2) \log{2}$. Since $\log{H(n)}=\log{2}+\log{M(2n)}$, we have the following corollary.
\begin{cor} Let $n \geq 2$. We have
\begin{equation}\label{eq:logHresult}\begin{split}
        &-\frac{1}{2}\sqrt{2n}\log{2n}-4\sqrt{2n}-\frac{7\sqrt{2n}}{\log{2n}}-\frac{5}{4}\log{2n}+\frac{5}{2}\log{2}+\log{\sqrt{2\pi}}\\&-K-\frac{13}{4}-\frac{1}{\log{2n}}-\frac{\log{2n}}{4\sqrt{2n}}-\frac{1}{4\sqrt{2n}}+\frac{1}{24n+1}
        \\&\leq  \log{H(n)} - \left[2n\log{n} + n \left(2\log{2} -2 + 2K\right)\right]\leq 
        \\& \frac{1}{2}\sqrt{2n}\log{2n}+4\sqrt{2n}+\frac{7\sqrt{2n}}{\log{2n}}+\frac{1}{2}\log{2n}+\log{\sqrt{2\pi}}\\&+2.8749719+\frac{1}{\log{2n}}+\frac{1}{24n}.
        \end{split}
    \end{equation}
    \end{cor}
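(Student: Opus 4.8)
The plan is to reduce the statement directly to Theorem~\ref{thm:explicitG} via Silverberg's identity $H(n)=G(2n)/2^{n-1}$ (recorded in the introduction). Taking logarithms, this reads
\begin{equation*}
\log H(n) = \log G(2n) - (n-1)\log 2.
\end{equation*}
Since $n\geq 2$ we have $2n\geq 4\geq 2$, so Theorem~\ref{thm:explicitG} applies with its argument specialised to $2n$. Writing $L(x)$ and $U(x)$ for the lower and upper bounding expressions appearing in \eqref{eq:logGresult} (as functions of the argument $x$), the theorem gives
\begin{equation*}
L(2n)\ \leq\ \log G(2n) - \Bigl[2n\log(2n) + 2n\bigl(\tfrac32\log 2 - 1 + K\bigr)\Bigr]\ \leq\ U(2n).
\end{equation*}

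Next I would substitute $\log G(2n) = \log H(n) + (n-1)\log 2$ and expand $2n\log(2n) = 2n\log n + 2n\log 2$. Collecting terms, the bracketed main term becomes
\begin{equation*}
2n\log n + 2n\log 2 + 3n\log 2 - 2n + 2nK,
\end{equation*}
and after absorbing the $(n-1)\log 2$ from $\log G(2n)$ and isolating $2n\log n + n(4\log 2 - 2 + 2K)$, exactly one additive constant $\log 2$ is left over. Moving it to both sides yields
\begin{equation*}
L(2n) + \log 2\ \leq\ \log H(n) - \Bigl[2n\log n + n\bigl(4\log 2 - 2 + 2K\bigr)\Bigr]\ \leq\ U(2n) + \log 2.
\end{equation*}

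Finally I would check that $L(2n)+\log 2$ and $U(2n)+\log 2$ coincide term-by-term with the two expressions in \eqref{eq:logHresult}: replacing $n$ by $2n$ in the square-root, logarithmic, and reciprocal terms of \eqref{eq:logGresult} reproduces all the $\sqrt{2n}$, $\log 2n$, $1/\log 2n$, $1/(24n)$ and $1/(24n+1)$ terms, while the extra $\log 2$ turns the $\tfrac12\log 2$ in the lower bound of \eqref{eq:logGresult} into the $\tfrac32\log 2$ of \eqref{eq:logHresult} and cancels the $-\log 2$ in the upper bound. There is no real obstacle here; the only thing requiring care is this bookkeeping of the constant shift by $\log 2$ and the factor-of-two substitution, so I would present the algebra compactly and leave the term-matching verification to the reader.
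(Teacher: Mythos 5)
Your derivation is correct: applying Theorem~\ref{thm:explicitG} at $2n$ and using $\log H(n)=\log G(2n)-(n-1)\log 2$ gives exactly the stated bounds, with the leftover $\log 2$ accounting for the $\tfrac32\log 2$ in the lower bound and the disappearance of $-\log 2$ in the upper bound, and all $\sqrt{2n}$, $\log 2n$, $1/(24n)$, $1/(24n+1)$ terms matching. This is precisely the (unwritten) argument the paper intends for this corollary, so your proposal coincides with the paper's approach.
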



    

    \begin{cor}\label{cor:niceAsymptoticG} We have
        \begin{align}
        \left\lvert\log{M(n)}-\left[n\log{n}+n\left(K-1\right)\right]\right\rvert&\leq \sqrt{n}\log{n} \text{ if }n\geq 3, \label{eq:logMbound}
                \\
                \left\lvert\log{G(n)}-\left[n\log{n}+n\left(\frac{1}
                {2}\log{2}+K-1\right)\right]\right\rvert&\leq\sqrt{n}\log{n} \text{ if }n\geq 3,\label{eq:logGbound} \\ 
                \left\lvert\log{H(n)}-\left[2n\log{n}+n(2\log{2}+2K-2)\right]\right\rvert&\leq \sqrt{2n}\log{2n}  \text{ if }n\geq 3.\label{eq:logHbound}\\
                M(n)&\leq (\sqrt{6}n)^n\label{eq:Mbound}  \text{ if }n\geq 1,\\
                G(n)\leq (2\sqrt{3}n)^n\text{ and }H(n)&\leq 2\left(2\sqrt{6}n\right)^{2n}\label{eq:GHbound}  \text{ if }n\geq 1.
        \end{align}
    \end{cor}
    \begin{table}[h!]
        \centering
        \begin{tabular}{|c|c|c|c|}
        \hline
            $n$ & $M(n)$ & $G(n)$ & $H(n)$\\
            \hline
            1 & 2 & 2 & 48\\
            2 & 24 & 48 & 11520\\
            \hline
        \end{tabular}
        \caption{Some values of $M(n)$, $G(n)$ and $H(n)$}
        \label{tab:MGHtable}
          \vspace{-25 pt}
    \end{table}

    \begin{rem}
    The first three inequalities do not apply when $n<3$; see Table \ref{tab:MGHtable} for the values of $M$, $G$ and $H$ in these cases.

        We get the following asymptotic results:
        \begin{equation}
            \begin{split}
            \log{M(n)}&=n\log{n}+n\left(K-1\right)+O(\sqrt{n}\log{n}),\\
                \log{G(n)}&=n\log{n}+n\left(\frac{1}{2}\log{2}-1+K\right)+O(\sqrt{n}\log{n}),\\ \label{asymptotich}
                \log{H(n)}&=2n\log{n}+n(2\log{2}-2+2K)+O(\sqrt{n}\log{n}).
            \end{split}
        \end{equation}
        Also, \eqref{eq:GHbound} improves on results of Silverberg \cite[Theorem 3.2 and Corollary 3.3]{Silverberg1992}:
        $$\text{If }n\geq 1\text{, then }G(n)<(6.31n)^n\text{ and }H(n)<2(9n)^{2n}.$$
    \end{rem}
        
\begin{proof}
        Let $l_M(n)$ and $u_M(n)$ be the left and right hand sides of \eqref{eq:logMresult} respectively. Write $L_M(n)=l_M(n)-\frac{1}{12n+1}$; we remove the $\frac{1}{12n+1}$ term because it complicates the argument. Importantly, $L_M(n)$ is still a valid lower bound for $\log{M(n)}$. It is easy to show that $\frac{u_M(n)}{\sqrt{n}\log{n}}$ decreases for $n> 1$ and $\frac{u_M(n)}{n}$ decreases for $n\geq 8$. Similarly, $\frac{L_M(n)}{\sqrt{n}\log{n}}$ is increasing for $n>1$.

        If $n\geq 15917$, then $\frac{u_M(n)}{\sqrt{n}\log{n}}<\frac{u_M(15917)}{\sqrt{15917}\log{15917}}<1$, and $\frac{L_M(n)}{\sqrt{n}\log{n}}>\frac{L_M(15917)}{\sqrt{15917}\log{15917}}>-1$. Computing $M(n)$ for $3\leq n \leq 15916$ establishes \eqref{eq:logMbound}. The proof of \eqref{eq:logGbound} proceeds similarly; we prove the result analytically for $n\geq15988$, and the result follows by calculating $G(n)$ for $3\leq n \leq 15987$. We prove \eqref{eq:logHbound} in the same way, but we need only calculate $H(n)$ for $3\leq n\leq 7923$.

       If $n\geq 157$, then $\frac{u_M(n)}{n}<\frac{u_M(157)}{157}<0.6687$. Therefore \begin{equation}\label{eq:calculatinglogM}
           \log{M(n)}\leq n\log{n}+ (-1+K)n+0.6687n<n\log{n}+n\log{\sqrt{6}}.\end{equation} Calculating $M(n)$ for $1\leq n \leq 157$ then establishes \eqref{eq:Mbound}.           \eqref{eq:GHbound} follows immediately from \eqref{eq:Mbound} because $G(n)=2^{\floor{n/2}}M(n)\leq (\sqrt{2})^nM(n)$, and $H(n)=2M(2n)$. 
           
           Further, we cannot improve the constant $\sqrt{6}$ in \eqref{eq:Mbound} because we have equality when $n=2$. The same is true of $2\sqrt{3}$ and $2\sqrt{6}$ in \eqref{eq:GHbound}, since we have equality when $n=2$ and $n=1$ respectively. In this sense, \eqref{eq:Mbound} and \eqref{eq:GHbound} are best possible.
    \end{proof}
    
 Though we will not do so, it is possible to write out a similar explicit bound for $\tilde{H}(n)$; any interested reader may use Lemma \ref{lemma:Htilde} to do so. To establish the lemma, we estimate the difference $\mathcal{H}(n):=\log{H}(n)-\log{\tilde{H}}(n)$. Notice that $H(n)=2\prod_{q \text{ prime}}q^{r(n,q)}$, so we have \begin{align*}
        \mathcal{H}(n)& =\log{2}+\sum_{q\text{ prime}}[r(n,q)-r'(n,q)]\log{q} \\
        &=\log{2}+(n+1)\log{2}+\sum_{\substack{q\text{ Fermat}\\\text{prime}}}[r(n,q)-\max{\{0,r(n,q)-1\}}]\log{q} \\
        &=(n+2)\log{2}+ \sum_{\substack{q\text{ Fermat}\\\text{prime}}} \min{\{r(n,q),1\}}(\log{q})=:(n+2)\log{2}+ S.
 \end{align*}
 It is trivial that $S\geq 0$. We also have
 \begin{equation*}\begin{split}
     S &\leq \sum_{\substack{q\leq n+1 \\q\text{ Fermat prime}}}\log{q} \leq \sum_{\substack{m\leq n+1 \\m\text{ Fermat number}}}\log{m} =\sum_{\substack{2^{2^k}\leq n \\k\geq0}}\log{(2^{2^k}+1)} \\
     &\leq \sum_{k\leq \log_2{\log_2{n}}}\log{2^{2^k}}+\frac{1}{2^{2^k}}=\log{2}\sum_{k\leq \log_2{\log_2{n}}}2^k +\sum_{k\leq \log_2{\log_2{n}}}\frac{1}{2^{2^k}} \\
     &\leq \left(2^{1+\log_2{\log_2{n}}}-1\right)\log{2}+1 =(2\log_2{n}-1)\log{2}+1= 2\log{n}-\log{2}+1,
 \end{split}
 \end{equation*}
 and we have proved the following statement.
\begin{lemma}\label{lemma:Htilde}
    $$(n+2)\log{2} \leq \mathcal{H}(n) \leq (n+1)\log{2} +2\log{n} +1.$$
\end{lemma}

\section{Explicit bounds on \texorpdfstring{$\Phi(n)$}{Φ(n)}}\label{sec:3}

Let $\nu_p(n)$ be the $p$-adic valuation of $n$, and let $\gamma$ be Euler's constant. The following is the second statement in \cite[Proposition 5.2]{Ozeki2024}.
\begin{prop}\label{thm:Ozeki5.2.2}
    Let $t=\nu_2(n)+2$. Let $p_i$ be the $i^\text{th}$ prime number. Then \begin{equation}\label{eq:Proposition4}\Phi(n)\leq 2n\prod_{i=1}^{t}\frac{p_i}{p_i-1}.\end{equation} In particular, if $n$ is odd then $\Phi(n)\leq 6n$.
\end{prop}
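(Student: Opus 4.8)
The plan is to bound \emph{every} positive integer $m$ with $\varphi(m)\mid 2n$, and then take the maximum over such $m$. Writing $m=\prod_{q\mid m}q^{a_q}$ over the distinct primes dividing $m$, we have $\varphi(m)=\prod_{q\mid m}q^{a_q-1}(q-1)=m\prod_{q\mid m}\frac{q-1}{q}$, hence
\begin{equation*}
m=\varphi(m)\prod_{q\mid m}\frac{q}{q-1}.
\end{equation*}
Since $\varphi(m)\mid 2n$ forces $\varphi(m)\le 2n$, it suffices to show that $m$ has at most $t$ distinct prime factors. Granting this, and using that $x\mapsto x/(x-1)$ is decreasing on $(1,\infty)$ with values $>1$: if $r_1<\dots<r_s$ are the distinct prime divisors of $m$ with $s\le t$, then $r_j\ge p_j$ for every $j$, so $\prod_{j=1}^{s}\frac{r_j}{r_j-1}\le\prod_{j=1}^{s}\frac{p_j}{p_j-1}\le\prod_{j=1}^{t}\frac{p_j}{p_j-1}$, which yields \eqref{eq:Proposition4}.

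The crux is the bound on the number of distinct prime factors of $m$, and the key observation is that each distinct odd prime divisor of $m$ contributes a factor $2$ to $\varphi(m)$. Precisely, if $q_1<\dots<q_r$ are the distinct odd primes dividing $m$, then $\prod_{i=1}^{r}(q_i-1)$ divides $\varphi(m)$, and since each $q_i-1$ is even this product is divisible by $2^r$. Hence $2^r\mid\varphi(m)\mid 2n$, so $r\le\nu_2(2n)=\nu_2(n)+1$; adding the possible extra prime factor $2$, the integer $m$ has at most $\nu_2(n)+2=t$ distinct prime factors, as required.

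Finally, if $n$ is odd then $\nu_2(n)=0$, so $t=2$ and $\prod_{j=1}^{2}\frac{p_j}{p_j-1}=\frac{2}{1}\cdot\frac{3}{2}=3$, giving $\Phi(n)\le 6n$. I expect the only genuine content to be the counting step — the relation $2^{r}\mid\varphi(m)$ with $r$ the number of distinct odd prime factors of $m$; the identity for $m/\varphi(m)$, the monotonicity of $x/(x-1)$, and the specialisation to odd $n$ are all routine.
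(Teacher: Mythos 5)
Your proof is correct and complete. Note that the paper itself does not prove this proposition at all---it is imported verbatim from Ozeki's Proposition 5.2---so there is no in-paper argument to compare against; your route (the identity $m=\varphi(m)\prod_{q\mid m}\frac{q}{q-1}$, the counting step $2^{r}\mid\varphi(m)\mid 2n$ giving at most $t=\nu_2(n)+2$ distinct prime factors, and the comparison of $\prod_{q\mid m}\frac{q}{q-1}$ with $\prod_{i=1}^{t}\frac{p_i}{p_i-1}$ via monotonicity of $x/(x-1)$) is exactly the standard argument one expects behind the cited result, and every step checks out.
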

If $n=3^k$ for $k\geq 1$, then $\Phi(n)\geq 6n$: we have $$\varphi(6n)=\varphi(2\times 3^{k+1})=2\times3^k\text{, so }\varphi(6n)\mid 2n.$$ Therefore $\Phi(n)=6n$, so if $\Phi(n)\leq Dn$ holds for all odd numbers, then $D\geq 6$.  

There is no constant $D$ such that this inequality holds for all positive integers; we show this by constructing values of $n$ such that $\frac{\Phi(n)}{n}$ is arbitrarily large.

For integers $k\geq 2$, let $k\#$ denote the product of all prime numbers less than or equal to $k$. It follows immediately from the definition of $\Phi$ that $$\Phi\left(\frac{1}{2}\varphi(m)\right)\geq m,$$ for any positive integer $m$. If $n=\frac{1}{2}\varphi(q\#)$ for any prime number $q\geq 2$, we have \begin{equation*}\begin{split}
        \frac{\Phi(n)}{n}&=\frac{\Phi(\frac{1}{2}\varphi(q\#))}{\frac{1}{2}\varphi(q\#)}\geq \frac{2(q\#)}{\varphi(q\#)}= \frac{2(2\cdot 3\cdot \ldots \cdot q)}{(2-1)(3-1)(5-1)\ldots(q-1)}=2\prod_{\substack{p\leq q \\ p \text{ prime}}}\frac{p}{p-1}.\end{split}
\end{equation*}

By \cite[3.28]{RosserSchoenfeld1962}, we have $$\prod_{\substack{p\leq q \\ p \text{ prime}}}\frac{p}{p-1}>e^{\gamma}\log{q}\left(1 - \frac{1}{2 \log^2{q}}\right).$$ The right hand side becomes arbitrarily large as $q\rightarrow \infty$, which proves the claim.
\begin{theorem}\label{thm:explicitPhi}
    Suppose $n$ is even, let $t=\nu_2(n)+2$ and $t'=\log_2(n)+2$. Then $$\Phi(n)\leq
    \begin{cases}
        9.625n, & 16 \nmid n \\
        2ne^{\gamma}\left(1+\frac{1}{(\log{t'}+\log{\log{t'}})^2}\right)(\log{t'}+\log{(\log{t'}+\log{\log{t'}})}), & 16 \mid n.
    \end{cases}$$
\end{theorem}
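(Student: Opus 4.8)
The starting point is Proposition~\ref{thm:Ozeki5.2.2}, which gives $\Phi(n) \le 2n \prod_{i=1}^{t} \frac{p_i}{p_i-1}$ with $t = \nu_2(n)+2$. Everything reduces to bounding the finite product $P(t) := \prod_{i=1}^{t} \frac{p_i}{p_i-1}$ as a function of $t$, and then translating the dependence on $\nu_2(n)$ into a dependence on $n$ itself (since $\nu_2(n) \le \log_2 n$, we have $t \le t' = \log_2 n + 2$, and $P$ is increasing in its argument, so $P(t) \le P(\lfloor t' \rfloor)$).

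\textbf{Case $16 \nmid n$.} Here $\nu_2(n) \in \{1,2,3\}$, so $t \in \{3,4,5\}$, and $P(t) \le P(5) = \frac{2}{1}\cdot\frac{3}{2}\cdot\frac{5}{4}\cdot\frac{7}{6}\cdot\frac{11}{10} = \frac{2\cdot3\cdot5\cdot7\cdot11}{1\cdot2\cdot4\cdot6\cdot10} = \frac{2310}{480} = 4.8125$. Then $\Phi(n) \le 2n \cdot 4.8125 = 9.625n$, which is exactly the stated bound. This case is a one-line computation.

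\textbf{Case $16 \mid n$.} Now I need an explicit upper bound on $P(t)$ valid for all $t \ge 6$ (equivalently $\nu_2(n) \ge 4$), expressed via elementary functions of $t$, and then I substitute $t \le t'$. The natural tool is Mertens' third theorem in the explicit form of Rosser--Schoenfeld: by \cite[3.30]{RosserSchoenfeld1962}, $\prod_{p \le x} \frac{p}{p-1} < e^{\gamma} \log x \left(1 + \frac{1}{\log^2 x}\right)$ for $x \ge 285$ (and one checks the threshold, or uses a direct computation for small $t$). The issue is that $P(t)$ is a product over the first $t$ primes, i.e. over $p \le p_t$, so I must take $x = p_t$ and then bound $\log p_t$ from above in terms of $t$. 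For this I would use an explicit upper bound on the $t$-th prime, e.g. $p_t < t(\log t + \log\log t)$ valid for $t \ge 6$ (Rosser--Schoenfeld / Dusart), giving $\log p_t < \log t + \log(\log t + \log\log t)$. Plugging this into the Mertens bound yields
\[
P(t) < e^{\gamma}\left(1 + \frac{1}{(\log t + \log\log t)^2}\right)\bigl(\log t + \log(\log t + \log\log t)\bigr),
\]
where in the $\frac{1}{\log^2 x}$ term I have crudely used $\log p_t > \log t + \log\log t$ (valid since $p_t > t\log t$ for $t\ge 6$) to keep the denominator clean. Finally, since $t \le t' = \log_2 n + 2$ and the right-hand side above is increasing in $t$, replacing $t$ by $t'$ and multiplying by $2n$ gives precisely the claimed bound for $16 \mid n$.

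\textbf{Main obstacles and loose ends.} The genuine work is (i) pinning down which explicit prime-counting / $n$-th-prime inequality from Rosser--Schoenfeld (or Dusart) is valid on the relevant range and verifying the small cases $6 \le t \le$ (threshold) by direct computation of $P(t)$ against the claimed formula, and (ii) checking the monotonicity in $t$ of the messy expression $e^{\gamma}(1 + (\log t+\log\log t)^{-2})(\log t + \log(\log t+\log\log t))$ so that passing from $t$ to the real number $t'$ is legitimate — this is a routine but slightly fiddly calculus check (the logarithmic factor is increasing, the correction factor is decreasing but lower-order, so their product is increasing for $t$ not too small, and one confirms the crossover is below $6$). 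A subtle point to get right is the interplay between "product over the first $t$ primes" and "product over $p \le x$": one must consistently use $x = p_t$ and not lose a factor, and the bound $\log p_t < \log t + \log(\log t + \log\log t)$ must be the one that feeds the main logarithmic term, while a weaker lower bound on $\log p_t$ suffices for the correction term. No deep idea is needed beyond these explicit analytic-number-theory inputs; the theorem is essentially a careful packaging of Mertens' theorem plus bounds on $p_t$, specialized through $\nu_2(n) \le \log_2 n$.
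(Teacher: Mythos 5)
Your proposal is correct and follows essentially the same route as the paper: the case $16\nmid n$ is the same one-line product computation from Proposition~\ref{thm:Ozeki5.2.2}, and the case $16\mid n$ uses exactly the Rosser--Schoenfeld inputs the paper uses ((3.30) for the Mertens product at $x=p_t$, $p_t<t(\log t+\log\log t)$ for the main term and $p_t>t\log t$ for the correction term), followed by the substitution $t\le t'$ via monotonicity. The only part you leave as a sketch --- that $\left(1+\frac{1}{(\log t+\log\log t)^2}\right)\left(\log t+\log(\log t+\log\log t)\right)$ is increasing for $t\ge 6$ --- is precisely the fiddly step the paper carries out by an explicit derivative/numerator estimate, and your heuristic for it is sound.
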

\begin{proof}
    First, suppose that $t<6$. By Proposition \ref{thm:Ozeki5.2.2}, we have \begin{equation}\label{eq:primeProductBound}\Phi(n)\leq 2n \cdot \frac{2}{1}\cdot\frac{3}{2}\cdot\frac{5}{4}\cdot\frac{7}{6}\cdot\frac{11}{10}=\frac{77}{8}n=9.625n.\end{equation}
    
    Assume that $t\geq 6$, and apply (3.30), (3.12) and (3.13) from \cite{RosserSchoenfeld1962} to \eqref{eq:Proposition4}:
    \begin{align}
        \Phi(n)&< 2n e^{\gamma}\log{p_t}\left(1+\frac{1}{\log^2{p_t}}\right)\notag \\
        &< 2n e^{\gamma}\left(1+\frac{1}{\log^2{(t\log{t})}}\right)\log{\left[t(\log{t}+\log{\log{t}})\right]}\notag \\
        &=2ne^{\gamma}\left(1+\frac{1}{(\log{t}+\log{\log{t}})^2}\right)(\log{t}+\log{(\log{t}+\log{\log{t}})}).\label{eq:phiBound}
    \end{align} The conditions $t\geq 6$, $\nu_2(n)\geq 4$ and $16\mid n$ are equivalent, so it remains to show that \begin{equation}\label{eq:logEquation}\left(1+\frac{1}{(\log{t}+\log{\log{t}})^2}\right)(\log{t}+\log{(\log{t}+\log{\log{t}})})\end{equation} is an increasing function of $t$ when $t\geq 6$. The result will follow because $t' \geq t$.

    Consider the derivative of (\ref{eq:logEquation}); the denominator, $(t\log{t})(\log{t}+\log{\log{t}})^3$, is positive if $t\geq 6$. The numerator is \begin{align*}c(t)&:=1-\log{t}+\log^3{t}+\log^4{t}+3(\log{t})(\log{\log{t}})+2(\log^2{t})(\log{\log{t}})\\&+3(\log^3{t})(\log{\log{t}})+(\log{\log{t}})^2+(\log{t})(\log{\log{t}})^2+3(\log^2{t})(\log{\log{t}})^2\\&+(\log{t})(\log{\log{t}})^3-2(1+\log{t})(\log{(\log{t}+\log{\log{t}})}).\end{align*} All but the second term and last term are positive whenever $t\geq 6$.

    In the last term of $c(t)$, observe that $\frac{1+\log{t}}{\log{t}}$ is maximised when $t=6$ (since $t\geq 6$),  so $1+\log{t}\leq\frac{1+\log{6}}{\log{6}}\log{t}<1.56\log{t}.$ Similarly, $\frac{\log{t}+\log{\log{t}}}{\log{t}}$ is maximised when $t=e^e$, and the maximum value is $1+\frac{1}{e}$. So, the last term is \begin{align*}
        -2(1+\log{t})(\log{(\log{t}+\log{\log{t}})})&>-(3.12\log{t})(\log{(1+1/e)}+\log{\log{t}})\\
        &>-0.98\log{t}-3.12(\log{t})(\log{\log{t}}).
    \end{align*}
    
    Therefore, the sum of the negative terms in $c(t)$ is greater than $-1.98\log{t}-3.12(\log{t})(\log{\log{t}})$.
    Since $\log^3{6}>5$, $\log^4{t}$ (the fourth term of $c(t)$) is greater than $5\log{t}$, so it more than cancels the $-1.98\log{t}$. The sum of the fifth and sixth terms, $3(\log{t})(\log{\log{t}})+2(\log^2{t})(\log{\log{t}})$, is greater than $5(\log{t})(\log{\log{t}})$ since $\log^2{t}>\log{t}$, so it more than cancels the $-3.12(\log{t})(\log{\log{t}})$. It follows that $c(t)$ is positive whenever $t\geq 6$, which completes the proof.
\end{proof}
\begin{rem}
    The function bounding $\Phi(n)$ when $16\mid n$ is asymptotic to $2e^\gamma n\log{\log{n}}$. For any $C>2e^\gamma$, we therefore have $\Phi(n)<Cn\log{\log{n}}$ for sufficiently large $n$ (this also follows from \cite[Proposition 5.3]{Ozeki2024}). We now find $(C,n_0)$ such that $\Phi(n)<Cn\log{\log{n}}$ whenever $n\geq n_0$. Ozeki gives $\left(4,e^{(1.001e)^9}\right)$ in \cite{Ozeki2024}, and we provide $(6.49,25)$.
\end{rem}

\begin{prop}\label{prop:explicitPhiBound}
    If $n\not \in S:=\{1,2,3,4,5,6,8,9,10,12,16,18,20,24\}$, then we have $\Phi(n)<6.49n\log{\log{n}}$.
\end{prop}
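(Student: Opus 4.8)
The plan is to split into the two regimes provided by Theorem~\ref{thm:explicitPhi}, namely $16\nmid n$ and $16\mid n$, and reduce each to checking that $C\log\log n = 6.49\log\log n$ eventually dominates the relevant linear-in-$n$ coefficient, after which a finite computation handles the small cases (which is exactly why the exceptional set $S$ appears). For the case $16\nmid n$, Theorem~\ref{thm:explicitPhi} gives $\Phi(n)\le 9.625n$, so it suffices to know $6.49\log\log n\ge 9.625$, i.e. $\log\log n\ge 9.625/6.49\approx 1.4830$, i.e. $\log n\ge e^{1.483}\approx 4.406$, i.e. $n\ge e^{4.406}\approx 81.9$. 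So for $n\ge 82$ with $16\nmid n$ the bound holds; the integers $n\le 81$ with $16\nmid n$ that are not already in $S$ must then be checked directly by computing $\Phi(n)$ (there are finitely many, and $\Phi$ is easy to evaluate since $\varphi(m)\mid 2n$ forces $m$ to have only small prime factors and bounded exponents).

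For the case $16\mid n$, write $t'=\log_2 n+2$ and let $B(t')$ denote the second branch in Theorem~\ref{thm:explicitPhi}, so $\Phi(n)\le B(t') = 2ne^{\gamma}\bigl(1+(\log t'+\log\log t')^{-2}\bigr)\bigl(\log t'+\log(\log t'+\log\log t')\bigr)$. We want $B(t')\le 6.49\,n\log\log n$. Since $\log\log n = \log(\log 2\cdot(t'-2))$ grows with $t'$ and the bracketed factor in $B$ grows only like $\log\log\log n$, the ratio $B(t')/(n\log\log n)$ tends to $0$ (indeed $B(t')/n\to$ something of size $\sim 2e^{\gamma}\log t'$ while $\log\log n\sim\log t'$, giving limiting ratio $2e^{\gamma}\approx 3.56 < 6.49$, with the ratio decreasing once $t'$ is moderately large because $2e^\gamma<6.49$ with comfortable room). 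Concretely I would: (i) show $B(t')/n$ is increasing in $t'$ for $t'\ge 6$ — this is essentially the monotonicity of \eqref{eq:logEquation} already established inside the proof of Theorem~\ref{thm:explicitPhi}, multiplied by the constant $2e^\gamma$; (ii) show $n\log\log n / (\text{linear bound})$, equivalently that $\log\log n$ grows fast enough, by comparing $6.49\log\log n$ against $2e^\gamma(1+(\log t'+\log\log t')^{-2})(\log t'+\log(\log t'+\log\log t'))$ directly for $t'$ at the threshold value $t'=6$ (i.e. $n=16$) and invoking the fact that the left side grows faster; (iii) for the genuinely small values of $16\mid n$ not in $S$ — i.e. $n\in\{16,48,80,\dots\}$ minus those in $S$ — verify the inequality numerically. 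Realistically the threshold from (i)+(ii) will already be small (around $n=16$ or the first few multiples of $16$), so step (iii) collapses to checking $n=48$, $n=80$, perhaps $n=16$ explicitly, after which $S$ exactly captures the remaining failures.

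The main obstacle will be step (ii): pinning down the precise crossover where $6.49\log\log n$ overtakes the slowly-growing upper bound, and doing so cleanly enough to avoid an unwieldy case analysis. Because $2e^\gamma\approx 3.56$ sits well below $6.49$, the inequality should hold already at $t'=6$ (that is, $n=16$) once one checks it, but one must be careful that the triple-nested-logarithm factor $\log t'+\log(\log t'+\log\log t')$ is genuinely smaller than $(6.49/2e^\gamma)\log\log n\approx 1.823\,\log\log n$ at the small end; at $n=16$ we have $t'=6$, $\log\log n=\log\log 16\approx\log 2.773\approx 1.020$, while the bracket is $\log 6+\log(\log 6+\log\log 6)\approx 1.792+\log(1.792+0.583)\approx 1.792+0.865\approx 2.657$, and $2e^\gamma\cdot 2.657\approx 9.46$ against $6.49\cdot 1.020\approx 6.62$ — so the bound actually \emph{fails} at $n=16$, consistent with $16\in S$. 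Thus the real work is locating the first multiple of $16$ \emph{not} in $S$ where it succeeds (it will be $n=48$: there $\log\log 48\approx\log 3.871\approx 1.353$, $6.49\cdot 1.353\approx 8.78$, and one checks $\Phi(48)$ directly since $t=\nu_2(48)+2=6$ exactly — here using the sharper $\Phi(n)\le 9.625n$ from the $t<6$... wait, $t=6$ so one uses \eqref{eq:phiBound}), and then to confirm monotonicity carries it forward for all larger multiples of $16$. Handling the boundary multiples of $16$ and all the $16\nmid n$ small cases is bookkeeping, but the count of exceptions must come out to exactly $S$, so the finite check has to be done honestly.
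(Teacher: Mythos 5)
There is a genuine gap in your treatment of the case $16\mid n$, and it is quantitative rather than cosmetic. The second branch of Theorem~\ref{thm:explicitPhi} divided by $n$ behaves like $2e^{\gamma}\bigl(1+(\log t'+\log\log t')^{-2}\bigr)\bigl(\log t'+\log(\log t'+\log\log t')\bigr)$, and you need this to drop below $6.49\log\log n=6.49\log\bigl((t'-2)\log 2\bigr)$. You checked $t'=6$ ($n=16$) and saw it fails, but then asserted the crossover happens at "the first few multiples of $16$". It does not: at $n=48$ ($t'\approx 7.6$) the branch gives roughly $12.2n$ against a target of about $8.8n$, and the deficit persists until roughly $t'\approx 22$, i.e.\ $n\approx 2^{20}$; even at $t'=22$ the bound evaluates to about $6.486$, barely under $6.49$. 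So your step (iii) is not "check $n=48$, $n=80$, perhaps $n=16$" — under your decomposition every multiple of $16$ up to about $10^{6}$ (tens of thousands of values) is outside the reach of the analytic bound and would need exact computation of $\Phi(n)$, or an additional intermediate bound you have not supplied. Note also that no coarse bound can rescue $n=48$, since $\Phi(48)/(48\log\log 48)>6.46$: the constant $6.49$ is nearly sharp there, so exact evaluation is unavoidable.

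The paper avoids this by splitting differently: it applies the asymptotic branch only when $2^{20}\mid n$ (so $t'\ge t\ge 22$), bounding each factor at $t'=22$ after proving the needed monotonicity of $\log\log t'/(\log(t'-2)+\log\log 2)$ — a monotonicity claim you also use but defer; for $2^{20}\nmid n$ it uses the product bound of Proposition~\ref{thm:Ozeki5.2.2} with $t\le 21$ to get $\Phi(n)<15.87n$, which beats $6.49n\log\log n$ once $n\ge 102132$, and then computes $\Phi(n)$ exactly for all $n\le 102131$. Your case $16\nmid n$ (bound $9.625n$, crossover near $n\approx 82$, small finite check) is fine as far as it goes, but to repair the $16\mid n$ case you must either adopt a split like the paper's (or some graded use of $t=\nu_2(n)+2$ in Proposition~\ref{thm:Ozeki5.2.2}) together with an honest large-scale computation, or accept a much larger explicit verification range than your sketch anticipates.
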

\begin{rem}
Table \ref{tab:PhiValues} provides values of $\Phi(n)$ for $n\in S$. Improving upon $6.49$ by even a small amount would increase the size of $S$; indeed, $\frac{\Phi(48)}{48\log{\log{48}}}>6.46.$
\end{rem}
\begin{proof}
Let $t'$ be as in Theorem \ref{thm:explicitPhi}, so that $\log{\log{n}}=\log{((t'-2)\log{2})}$. First, we will assume that $2^{20}\mid n$, so that $t'\geq t\geq 22$. Then
\begin{equation*}
\frac{\Phi(n)}{n\log{\log{n}}}\leq  \frac{2e^{\gamma}\left(1+\frac{1}{(\log{t'}+\log{\log{t'}})^2}\right)(\log{t'}+\log{(\log{t'}+\log{\log{t'}})})}{\log{((t'-2)\log{2})}}.
\end{equation*}

First, $\left(1+\frac{1}{(\log{t'}+\log{\log{t'}})^2}\right)$ is a decreasing function of $t'$, so it is no greater than $$\left(1+\frac{1}{(\log{22}+\log{\log{22}})^2}\right)<1.05617.$$

Then, $\frac{\log{t'}}{\log{(t'-2)+\log{\log{2}}}}$ is decreasing with respect to $t'$ when $t'\geq 10$, so $$\frac{\log{t'}}{\log{(t'-2)+\log{\log{2}}}}<\frac{\log{22}}{\log{20}+\log{\log{2}}}<1.17566.$$

In the proof of Theorem \ref{thm:explicitPhi}, we observed that $(\log{t'}+\log{\log{t'}})\leq \left(1+\frac{1}{e}\right)\log{t'}$, so $$ \log{(\log{t'}+\log{\log{t'}})}< 0.31327+\log{\log{t'}}.$$

Since $\frac{0.31327}{\log{(t'-2)+\log{\log{2}}}}$ is decreasing, its maximum value is $\frac{0.31327}{\log{20}+\log{\log{2}}}<0.11915$.

Assuming for now that $\frac{\log{\log{t'}}}{\log{(t'-2)+\log{\log{2}}}}$ is decreasing for $t'\geq 22$, it is no larger than $\frac{\log{\log{22}}}{\log{20}+\log{\log{2}}}<0.42922$. From all these inequalities, we find that $$\frac{\Phi(n)}{n\log{\log{n}}}<2e^\gamma(1.05617)(1.17566+0.11915+0.42922)<6.49.$$ Now assume that $2^{20}\nmid n$, so $t<22$. The argument of \eqref{eq:primeProductBound} yields $\Phi(n)<15.87n$. And, if $n\geq 102132$, then $15.87n<6.49n\log{\log{n}}$. Computing exact values of $\Phi(n)$ when $n\leq 102131$ establishes the result.

It only remains to prove that $\frac{\log{\log{t'}}}{\log{(t'-2)+\log{\log{2}}}}$ is decreasing when $t'\geq 22$. By differentiating, it suffices to show that \begin{equation*}\begin{split}\frac{1}{t'\log{t'}}-\frac{\log{\log{t'}}}{(t'-2)(\log{(t'-2)}+\log{\log{2}})}&<0,\text{ or equivalently,}\\(t'-2)(\log{(t'-2)}+\log{\log{2}})&<t'(\log{t'})(\log{\log{t'}}).\end{split}\end{equation*} This is implied by \begin{align}t'(\log{(t'-2)}+\log{\log{2}})&<t'(\log{t'})(\log{\log{t'}})\impliedby\notag\\
    0&<(\log{t'})(\log{\log{t'}})-\log{(t'-2)}-\log{\log{2}}\label{eq:tEquation}.
    \end{align} Now, (\ref{eq:tEquation}) holds when $t'=22$. The derivative of its right hand side is $$\frac{1+\log{\log{t'}}}{t'}-\frac{1}{t'-2}.$$ If $t'\geq 22$, then $0<(t'-2)(\log{\log{t'}})-2$, and thus
    \begin{equation*}\begin{split}
        t'<(t'-2)(1+\log{\log{t'}})\implies
        \frac{1}{t'-2}<\frac{1+\log{\log{t'}}}{t'},\end{split}
    \end{equation*} so the right hand side of \eqref{eq:tEquation} is increasing. This completes the proof.
\end{proof}

\begin{table}
    \centering
    \begin{tabular}{|c|c|c|c|c|c|c|c|c|c|c|c|c|c|c|}
    \hline
        $n$ & 1 & 2 & 3 & 4 & 5 & 6 & 8 & 9 & 10 & 12 & 16 & 18 & 20 & 24 \\
    \hline
        $\Phi(n)$ & 6 & 12 & 18 & 30 & 22 & 42 & 60 & 54 & 66 & 90 & 120 & 126 & 150 & 210\\
    \hline
    \end{tabular}
    \caption{Some values of $\Phi(n)$}
    \label{tab:PhiValues}
\end{table}

\section*{Acknowledgements}
The authors are grateful to Tim Trudgian for suggesting the problem, reading the manuscript and providing helpful feedback. We would like to thank G\"ael Rémond for his insightful comments that significantly enriched our knowledge of the possible applications for this paper. The authors are also thankful to Gabriel Dill for drawing our attention to \cite{Remond}, and to Alice Silverberg and Yoshiyasu Ozeki for their comments. The first author thanks Saunak Bhattacharjee for interesting discussions.

\bibliographystyle{plain}
\bibliography{refs}
\end{document}